%%%%%%%%%%%%%%%%%%%%%%%%%%%%%%%%%%%%%%%%%%%%%%%%%%%%%%%%%%%%%%%%%%%%%%%%%
% Sample tex file for Illinois Journal of Mathematics
%%%%%%%%%%%%%%%%%%%%%%%%%%%%%%%%%%%%%%%%%%%%%%%%%%%%%%%%%%%%%%%%%%%%%%%%%

%%%%%%%%%%%%%%%%%%%%%%%%%%%%%%%%%%%%%%%%%%%%%%%%%%%%%%%%%%%%%%%%%%%%%%%%%
%%% File sample-masa
%%%%%%%%%%%%%%%%%%%%%%%%%%%%%%%%%%%%%%%%%%%%%%%%%%%%%%%%%%%%%%%%%%%%%%%%%

%%%%%%%%%%%%%%%%%%%%%%%%%%%%%%%%%%%%%%%%%%%%%%%%%%%%%%%%%%%%%%%%%%%%%%%%%
%%% Topmatter
%%%%%%%%%%%%%%%%%%%%%%%%%%%%%%%%%%%%%%%%%%%%%%%%%%%%%%%%%%%%%%%%%%%%%%%%%

\documentclass[draft]{amsart}

\setlength{\textheight}{43pc}
\setlength{\textwidth}{28pc}

%% packages

\usepackage{amssymb}
\usepackage{url}
\usepackage{amscd}
\usepackage{enumerate}
%
%\raggedbottom
%
%%%%%%%%%%%%%%%%%%%%%%%%%%%%%%%%%%%%%%%%%%%%%%%%%%%%%%%%%%%%%%%%%%%%%%%%%
%%% Theorem declarations 
%%%%%%%%%%%%%%%%%%%%%%%%%%%%%%%%%%%%%%%%%%%%%%%%%%%%%%%%%%%%%%%%%%%%%%%%%

\usepackage[all]{xy}
%%%%%%%%%%%%%%%%%%%%%%%

\def \N{{\mathbb N}}

\def \R{{\mathbb R}}

\def \1{{\mathbb 1}}

\theoremstyle{plain}
\newtheorem{theorem}{Theorem}
\newtheorem{proposition}{Proposition}
\newtheorem{definition}{Definition}
 
\newtheorem{corollary}{Corollary}

\theoremstyle{remark}

\newtheorem{Exemps}{Examples}
\newtheorem{Exemp}{Example}

\title[Porosity in the space of  Hölder-functions. ]{Porosity in the space of  Hölder-functions.}
\author{Mohammed Bachir}
%%%%%%%%%%%%%%%%%%%%%%%%%%%%%%%%%%%%%%%%%%%%%%%%%%%%%%%%%%%%%%%%%%%%%%%%%
%%%  Topmatter 
%%%%%%%%%%%%%%%%%%%%%%%%%%%%%%%%%%%%%%%%%%%%%%%%%%%%%%%%%%%%%%%%%%%%%%%%%
\date{12/05/2021}

\address{Laboratoire SAMM 4543, Universit\'e Paris 1 Panth\'eon-Sorbonne\\ Centre P.M.F. 90 rue Tolbiac\\
75634 Paris cedex 13\\
France}
\email{Mohammed.Bachir@univ-paris1.fr}
\begin{document}
\begin{abstract}
Let  $(X,d)$ be a bounded metric space with a base point $0_X$, $(Y,\|\cdot\|)$ be a Banach space and $\textnormal{Lip}_0^\alpha (X,Y)$ be the space of all $\alpha$-Hölder-functions that vanish at $0_X$, equipped with its natural norm ($0<\alpha\leq 1$). Let $0<\alpha < \beta \leq 1$. We prove that  $\textnormal{Lip}_0^\beta(X,Y)$  is a $\sigma$-porous subset of $\textnormal{Lip}_0^\alpha(X,Y)$, if (and only if) $\inf\lbrace d(x,x'): x,x'\in X; x\neq x'\rbrace=0$  (i.e. $d$ is non-uniformly discrete). A more general result will be given.
\end{abstract}
\maketitle
\noindent {\bf 2010 Mathematics Subject Classification:} 26A16; 54E52; 47L05;  46B25

\noindent {\bf Keyword, phrase:} vector-valued Lipschitz and H$\ddot{\textnormal{o}}$lder-functions, vector-valued Linear operators, $\sigma$-porosity, barrier cone.
\section{Introduction}
The main  result of this note  is Theorem \ref{Lip}, which gives a condition for some class of subsets of Lipschitz functions  to be $\sigma$-porous subsets. The result in the abstract, as well as all the other results of this note, are just a very immediate consequence of this main result. However, the main motivations which led to the main theorem of this note, was precisely the result mentioned in the abstract.
\vskip5mm
 Given a metric space $(X, d)$ with a distinguished point $0_X$ (called a base point of $X$)  and a Banach space  $(Y,\|\cdot\|)$, we denote by $\textnormal{Lip}_0(X_d,Y)$ (or  by $\textnormal{Lip}_0(X,Y)$, if no ambiguity arises)  the Banach space of all Lipschitz functions from $X$ into $Y$ that vanish at the base point $0_X$, equipped with its natural norm defined by 
\begin{eqnarray*}
 \|f\|_L:=\sup \lbrace \frac{\|f(x)-f(x')\|}{d(x,x')}: x, x'\in X; x\neq x\rbrace, \forall f \in \textnormal{Lip}_0(X_d,Y).
\end{eqnarray*}
We denote simply $\textnormal{Lip}_0(X_d)$ or $\textnormal{Lip}_0(X)$, if $Y=\R$. The space $L(X,Y)$ denotes the space of all linear bounded operators from $X$ into $Y$. The space $X^*$ denotes the topological dual of $X$. Notice that the space $\textnormal{Lip}_0(X,Y)$ can be  isometrically itentified to $L(\mathcal{F}(X),Y)$ where $\mathcal{F}(X)$ is the free-Lipschitz space over $X$ introduced by Godefroy-Kalton in \cite{GK}. Let us recall the definition of $\sigma$-porosity.
\begin{definition}\label{prous}  Let $(F, d)$ be a metric space and $A$ be a subset of $F$. A set $A$ of $F$  is called porous if there is a $c\in (0,1)$ so that for every $x\in A$  there are $(y_n)\subset F$ with $y_n \to  x$ and so that $B(y_n, c d(y_n, x)) \cap A = \emptyset$ for every $n$ (We denote by $B(z, r)$ the closed ball with center $z$ and radius $r$). A set $A$ is called $\sigma$-porous if it can be represented as a union $A = \cup_{n=0}^{+\infty}A_n $ of countably many porous sets (the porosity constant $c_n$ may vary with $n$).
\end{definition}
Every $\sigma$-porous set is of first Baire category. Moreover, in $\R^n$, every $\sigma$-porous set is of Lebesque measure zero. However,  there does exist a non-$\sigma$-porous subset of $\R^n$ which is of the first category and of Lebesgue measure zero (for more informations about $\sigma$-porosity, we refer to \cite{Z} and \cite{LP}). 

\vskip5mm
\paragraph{\bf The property $(\mathcal{P})$} Let $(X,d)$ be a metric space and $Y$ be a Banach space. Let $F$ be a nonempty (closed) convex cone of $\textnormal{Lip}_0(X,Y)$. We say that $F$ satisfies property $(\mathcal{P})$ if there exists a positive constant $K_F>0$ depending only on $F$ such that: 
$$(\mathcal{P}) \hspace{3mm} \forall (x, x')\in X\times X, \exists p\in F : \|p\|_L\leq K_F \textnormal{ and } \|p(x)-p(x')\|=d(x,x').$$
This property is related to the Hahn-Banach theorem and norming sets.
\begin{Exemps} \label{exemple1} The property $(\mathcal{P})$ satisfied in the following cases: 

$(i)$  if $X$ is a normed space and $F$ contains the space $X^*.e:=\lbrace x \mapsto p(x).e : p\in X^*\rbrace$, where $e\in Y$ is a fixed point  such that $\|e\|=1$.

$(ii)$ if $(X,d)$ is a metric space and $F$ contains the functions $d_z: x\mapsto d(x,z).e$, for all $z\in X$, where $e\in Y$ is a fixed point is such that $\|e\|=1$. 

$(iii)$  In particular, the space $\textnormal{Lip}_0(X,Y)$ satisfies the property  $(\mathcal{P})$. If moreover,  $X$ is a normed space, then $L(X,Y)$ has the property  $(\mathcal{P})$ too.
\end{Exemps}

\begin{proof}

$(i)$ By the Hahn-Banach theorem, for all $x\in X$ there exists $x^*\in X^*$ such that $\|x^*\|=1$ and $x^*(x)=\|x\|$. Then, for each $x\in X$, we consider the continuous  linear map $p_x=x^*.e: X\to Y$ defined by $p_x(z)=x^*(z)e$ for all $z\in X$, and the property $(\mathcal{P})$ is satisfied.

$(ii)$ Immediat.

$(iii)$ This part follows from $(i)$ and $(ii)$ respectively.
\end{proof}

\section{The main result}
We are going to give the proof of the main result of this note.  Let $(X,d)$  be a metric space with a base point $0$ and $Y$ be a Banach space.  Let $F\subset \textnormal{Lip}_0(X,Y)$ and $\phi: X\times X\to \R^+$ be a positive function such that $\phi(x,x')=0$ if and only if $x=x'$. For each real number $s>0$, we denote:
$$\mathcal{N}_{\phi,s}(F):=\lbrace f\in F: \sup_{x,x'\in X; x\neq x'} \frac{\|f(x)-f(x')\|}{\phi(x,x')} \leq s \rbrace,$$
$$\mathcal{N}_{\phi}(F):=\lbrace f\in F: \sup_{x,x'\in X; x\neq x'} \frac{\|f(x)-f(x')\|}{\phi(x,x')} <+\infty \rbrace.$$
Notice that $\mathcal{N}_{\phi}(F)=\cup_{k\in \N}\mathcal{N}_{\phi,k}(F)$ and $\mathcal{N}_{\psi}(F)\subset \mathcal{N}_{\phi}(F)$ if $\psi \leq \phi$.

\begin{theorem} \label{Lip}  Let $F$ be a nonempty (closed) convex cone of $\textnormal{Lip}_0(X,Y)$ satisfying $(\mathcal{P})$. Let $\phi: X\times X\to \R^+$ be any positive function such that $\phi(x,x')=0$ if and only if $x=x'$. Suppose that $\inf\lbrace \frac{\phi(x,x')}{d(x,x')}: x,x'\in X; x\neq x'\rbrace=0$, then for every positive real number $s>0$, we have that $\mathcal{N}_{\phi,s}(F)$ is a porous subset of $(F,\|\cdot\|_L)$.  Consequently, the following assertions are equivalent.

$(1)$ $\mathcal{N}_{\phi}(F) \neq F$.

$(2)$ $\inf\lbrace \frac{\phi(x,x')}{d(x,x')}: x,x'\in X; x\neq x'\rbrace=0$. 

$(3)$ $\mathcal{N}_{\phi}(F)$ is a $\sigma$-porous  subset of $(F,\|\cdot\|_L)$.
\end{theorem}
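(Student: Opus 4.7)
The plan is to prove the porosity of each $\mathcal{N}_{\phi,s}(F)$ in $F$ by an explicit perturbation argument, from which the three-way equivalence drops out by Baire category.

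First I would fix once and for all a porosity constant $c\in(0,1)$ satisfying $cK_{F}<1$; for concreteness, $c:=1/(2(K_{F}+1))$ works and gives $1-cK_{F}>1/2$. Given $f\in\mathcal{N}_{\phi,s}(F)$, I build the perturbations $g_{n}:=f+\lambda_{n}p_{n}$ as follows. Using the hypothesis $\inf\{\phi(x,x')/d(x,x')\}=0$, pick pairs $x_{n}\neq x_{n}'$ in $X$ with $\phi(x_{n},x_{n}')/d(x_{n},x_{n}')<1/n$. Using property $(\mathcal{P})$, pick $p_{n}\in F$ with $\|p_{n}\|_{L}\leq K_{F}$ and $\|p_{n}(x_{n})-p_{n}(x_{n}')\|=d(x_{n},x_{n}')$. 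Finally set $\lambda_{n}:=3s/(n(1-cK_{F}))$, a sequence tending to $0$ at the right rate. Because $F$ is a convex cone and $\lambda_{n}>0$, $g_{n}\in F$, and $\|g_{n}-f\|_{L}\leq\lambda_{n}K_{F}\to 0$, so $g_n\to f$ as required.

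The heart of the argument is a reverse-triangle-inequality estimate at $(x_{n},x_{n}')$. For any $h\in B(g_{n},c\|g_{n}-f\|_{L})$, writing $h-g_n$ as a Lipschitz function gives
\[
\|h(x_{n})-h(x_{n}')\|\;\geq\;\lambda_{n}(1-cK_{F})\,d(x_{n},x_{n}')-\|f(x_{n})-f(x_{n}')\|.
\]
Bounding the last term by $s\,\phi(x_{n},x_{n}')$ using $f\in\mathcal{N}_{\phi,s}(F)$, and then exploiting the choice of $\lambda_{n}$ against $\phi(x_{n},x_{n}')/d(x_{n},x_{n}')<1/n$, turns the right-hand side into a quantity strictly greater than $s\,\phi(x_{n},x_{n}')$; hence $h\notin\mathcal{N}_{\phi,s}(F)$, proving porosity. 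The main subtlety I anticipate is the simultaneous calibration of three scales: the porosity constant $c$ must remain strictly below $1/K_{F}$ so that the factor $1-cK_{F}$ survives; the perturbation amplitude $\lambda_{n}$ must vanish yet still outrun $s\,\phi(x_n,x_n')/d(x_n,x_n')$; and the pair $(x_n,x_n')$ must be chosen so that $\phi/d$ there is small enough for both constraints to close. Once these are tuned together, the rest is routine.

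The equivalences then follow quickly. For (2)$\Rightarrow$(3), write $\mathcal{N}_{\phi}(F)=\bigcup_{k\in\N}\mathcal{N}_{\phi,k}(F)$ and apply the porosity just proved to each $\mathcal{N}_{\phi,k}(F)$. For (3)$\Rightarrow$(1), $F$ is closed in the Banach space $\textnormal{Lip}_{0}(X,Y)$, hence a complete, and therefore Baire, metric space; a $\sigma$-porous (thus meager) subset cannot exhaust it. For (1)$\Rightarrow$(2), I contrapose: if $\phi\geq\delta d$ off the diagonal for some $\delta>0$, then every $f\in F\subset\textnormal{Lip}_{0}(X,Y)$ satisfies $\|f(x)-f(x')\|/\phi(x,x')\leq\|f\|_{L}/\delta$, forcing $\mathcal{N}_{\phi}(F)=F$.
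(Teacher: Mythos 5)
Your proposal is correct and follows essentially the same route as the paper: perturb $f$ by $\lambda_n p_n$ with $p_n$ supplied by property $(\mathcal{P})$ at a pair where $\phi/d$ is small, use the reverse triangle inequality at that pair to expel the whole ball $B(g_n, c\|g_n-f\|_L)$ from $\mathcal{N}_{\phi,s}(F)$, and then get the equivalences from the decomposition $\mathcal{N}_{\phi}(F)=\bigcup_k \mathcal{N}_{\phi,k}(F)$, Baire category, and the contrapositive of $(1)\Rightarrow(2)$. The only differences are in the calibration of constants (you take $c=1/(2(K_F+1))$ and $\lambda_n=3s/(n(1-cK_F))$, the paper takes $c=1/(2K_F)$ and $\lambda_n=\sqrt{r_n}$), and your choices check out.
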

\begin{proof} $(1) \Longrightarrow (2)$. Suppose that  $\alpha:= \inf\lbrace \frac{\phi(x,x')}{d(x,x')}: x,x'\in X; x\neq x'\rbrace >0$, then  $\phi(x,x')\geq \alpha d(x,x')$ for all $x,x'\in X$. It follows that for every $f \in F$, we have that  
\begin{eqnarray*}
\sup_{x,x'\in X; x\neq x'} \frac{\|f(x)-f(x')\|}{\phi(x,x')} \leq \|f\|_L\sup_{x,x'\in X; x\neq x'} \frac{d(x,x')}{\phi(x,x')}\leq \frac{\|f\|_L}{\alpha} <+\infty .
\end{eqnarray*}
Thus,  $\mathcal{N}_{\phi}(F) = F$. Part $(3) \Longrightarrow (1)$ is trivial. 

Let us prove that if $\inf\lbrace \frac{\phi(x,x')}{d(x,x')}: x,x'\in X; x\neq x'\rbrace=0$, then for every $s>0$, we have that $\mathcal{N}_{\phi,s}(F)$ is a porous subset of $(F,\|\cdot\|_L)$, this gives in particular  $(2) \Longrightarrow (3)$. Indeed, if $\inf\lbrace \frac{\phi(x,x')}{d(x,x')}: x,x'\in X; x\neq x'\rbrace=0$, then there exists a pair of sequences $(a_n), (b_n)\subset X$ such that $0< r_n:=\frac{\phi(a_n,b_n)}{d(a_n,b_n)}  \to 0$. By assumption, there exists $K_F>0$ and a sequence $(p_n) \subset F$ such that $\|p_n\|_L\leq K_F$ and $\|p_n(a_n)-p_n(b_n)\|=d(a_n,b_n)$, for all $n\in \N$.  Let $f \in \mathcal{N}_{\phi,s}(F)$, then we have that $\sup_{x,x'\in X; x\neq x'} \frac{\|f(x)-f(x')\|}{\phi(x,x')} \leq s$. It follows that
\begin{eqnarray*}
\frac{\|(f+\sqrt{r_n}p_n)(a_n) -(f+\sqrt{r_n}p_n)(b_n) \|}{\phi(a_n,b_n)} &\geq& \sqrt{r_n} \frac{\|p_n(a_n)-p_n(b_n)\|}{\phi(a_n,b_n)} - \frac{\|-(f(a_n)-f(b_n))\|}{\phi(a_n,b_n)}\\
                                                                          &\geq&   \sqrt{r_n}\frac{d(a_n,b_n)}{\phi(a_n,b_n)} -  \sup_{x,x'\in X; x\neq x'}\frac{\|f(x)-f(x')\|}{\phi(x,x')}\\
                                                                           &\geq& \frac{1}{\sqrt{r_n}} - s
\end{eqnarray*}
Since, $r_n \to 0$, when $n\to +\infty$, there exists a subsequence  $(r_{n_m})$ such that  
$$ \frac{1}{\sqrt{r_{n_m}}} > 4s, \hspace{2mm} \forall m\in \N.$$ We set $f_m=f+\sqrt{r_{n_m}} p_{n_m}\in F$, for all $m\in \N$. We have that $$\|f_m-f\|_L=\sqrt{r_{n_m}}\|p_{n_m}\|_L\leq K_F\sqrt{r_{n_m}}\to 0 \textnormal{ when } m\to +\infty.$$ Let us prove that $B(f_m,\frac{1}{2K_F}\|f_m-f\|_L)\subset F\setminus \mathcal{N}_{\phi,s}(F)$ for all $m\in \N$. Indeed, let $g\in B(f_m,\frac{1}{2}\|f_m-f\|_L)$, then we have using the above informations that
\begin{eqnarray*}
\frac{\|g(a_{n_m})-g(b_{n_m})\|}{\phi(a_{n_m},b_{n_m})}&\geq& \frac{\|f_m(a_{n_m})-f_m(b_{n_m})\|}{\phi(a_{n_m},b_{n_m})}- \frac{\|(f_m-g)(a_{n_m})-(f_m-g)(b_{n_m})\|}{\phi(a_{n_m},b_{n_m})}\\
              &\geq& (\frac{1}{\sqrt{r_{n_m}}} - s) -\|f_m-g\|_L \frac{d(a_{n_m},b_{n_m})}{\phi(a_{n_m},b_{n_m})}\\
              &\geq&  (\frac{1}{\sqrt{r_{n_m}}} - s) -\frac{1}{2K_F}\|f_m-f\|_L \frac{d(a_{n_m},b_{n_m})}{\phi(a_{n_m},b_{n_m})}\\
 &\geq& (\frac{1}{\sqrt{r_{n_m}}} - s) -\frac{1}{2}\sqrt{r_{n_m}}\frac{1}{r_{n_m}}\\
&=& \frac{1}{2\sqrt{r_{n_m}}}-s\\
&>& s.
\end{eqnarray*}
Thus, we have that $g\in F\setminus \mathcal{N}_{\phi,s}(F)$ and so that  $B(f_m,\frac{1}{2}\|f_m-f\|_L)\subset F\setminus \mathcal{N}_{\phi,s}(F)$ for all $m\in \N$. Thus, $\mathcal{N}_{\phi,s}(F)$ is porous in $F$  (with $c=\frac{1}{2K_F}$). It follows that $\mathcal{N}_{\phi}(F)=\cup_{k\in \N} \mathcal{N}_{\phi,k}(F)$ is $\sigma$-porous in $(F,\|\cdot\|_L)$.
\end{proof}

\subsection{Immediate consequences}
We deduce immediately the result mentioned in the abstract.
\begin{corollary} \label{Porosity1} Let  $X_1:=(X,d_1)$ and $X_2:=(X,d_2)$ be a set equipped with two metrics such that $d_1\leq d_2$ and let $(Y,\|\cdot\|)$ be a Banach space. Then,  $\textnormal{Lip}_0(X_1,Y)$ is a $\sigma$-porous subset of $\textnormal{Lip}_0(X_2,Y)$ if (and only if) $d_1$ and $d_2$  are not equivalent, if and only if  $\textnormal{Lip}_0(X_1,Y)\neq \textnormal{Lip}_0(X_2,Y)$.
\end{corollary}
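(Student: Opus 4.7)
The plan is to reduce the corollary to a direct application of Theorem \ref{Lip}, with $F := \textnormal{Lip}_0(X_2,Y)$ playing the role of the ambient cone and with $\phi := d_1$ playing the role of the auxiliary function. The underlying metric space in Theorem \ref{Lip} is then $(X,d_2)$, and I would verify the three ingredients one by one.

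\emph{Setup and identifications.} Because $d_1 \leq d_2$, every $d_1$-Lipschitz function is $d_2$-Lipschitz (with a smaller or equal Lipschitz constant), so $\textnormal{Lip}_0(X_1,Y) \subset \textnormal{Lip}_0(X_2,Y)$, which is what makes the notion of ``$\sigma$-porous subset'' meaningful. Unwinding the definition of $\mathcal{N}_\phi$ with $\phi=d_1$ and $F=\textnormal{Lip}_0(X_2,Y)$ gives the key identification
\begin{equation*}
\mathcal{N}_{d_1}\bigl(\textnormal{Lip}_0(X_2,Y)\bigr)=\textnormal{Lip}_0(X_1,Y),
\end{equation*}
since a function in $F$ is $d_1$-Lipschitz precisely when $\sup_{x\neq x'}\|f(x)-f(x')\|/d_1(x,x')<+\infty$. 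Also $\phi=d_1$ is a metric, so $\phi(x,x')=0\iff x=x'$, as required by Theorem~\ref{Lip}.

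\emph{Property $(\mathcal{P})$.} The cone $F=\textnormal{Lip}_0(X_2,Y)$ satisfies $(\mathcal{P})$ by Examples~\ref{exemple1}(iii) applied to the metric $d_2$: for each pair $(x,x')$ the function $p = d_2(\cdot,x')\cdot e - d_2(x',x')\cdot e$ (with any unit vector $e\in Y$) belongs to $F$, has $\|p\|_L\leq 1$, and satisfies $\|p(x)-p(x')\|=d_2(x,x')$, so one may take $K_F=1$.

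\emph{Translation of the key infimum.} Under the standing hypothesis $d_1\leq d_2$, the metrics $d_1$ and $d_2$ are equivalent (bi-Lipschitz) if and only if there is $c>0$ with $d_2\leq c\,d_1$, i.e.\ $\inf_{x\neq x'}\frac{d_1(x,x')}{d_2(x,x')}\geq 1/c>0$. Thus
\begin{equation*}
\inf_{x\neq x'}\frac{\phi(x,x')}{d_2(x,x')}=0\;\Longleftrightarrow\; d_1 \text{ and } d_2 \text{ are not equivalent}.
\end{equation*}

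With these three observations in place, the equivalence of the three conditions $(1)$--$(3)$ in Theorem~\ref{Lip} (applied in $(F,\|\cdot\|_L)=(\textnormal{Lip}_0(X_2,Y),\|\cdot\|_L)$) translates verbatim into the three conditions of the corollary. I do not anticipate a genuine obstacle here; the only point requiring mild care is interpreting ``equivalent'' in the bi-Lipschitz sense so that, combined with the one-sided inequality $d_1\leq d_2$, it matches the quantitative condition appearing in Theorem~\ref{Lip}.
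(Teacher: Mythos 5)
Your proof is correct and takes essentially the same route as the paper: apply Theorem~\ref{Lip} with $F=\textnormal{Lip}_0(X_2,Y)$ and $\phi=d_1$, using the identification $\mathcal{N}_{d_1}\bigl(\textnormal{Lip}_0(X_2,Y)\bigr)=\textnormal{Lip}_0(X_1,Y)$ together with Examples~\ref{exemple1}$(iii)$. One cosmetic remark: your witness for property $(\mathcal{P})$ should be $p(z)=\bigl(d_2(z,x')-d_2(0_X,x')\bigr)e$ so that it vanishes at the base point (the term $d_2(x',x')e$ you subtract is zero), but this changes neither $\|p\|_L$ nor $\|p(x)-p(x')\|$.
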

\begin{proof} We use Theorem \ref{Lip} and part $(iii)$ of Exemple \ref{exemple1}  observing the following equality $\textnormal{Lip}_0(X_1,Y)=\mathcal{N}_{d_1}(\textnormal{Lip}_0(X_2,Y)).$
\end{proof}
Notice that if  $0<\alpha \leq 1$  and $d$ is a metric, so is $d^\alpha$, hence the above corollay applies to the space of $\alpha$-Hölder-functions that vanish at $0_X$ which is $\textnormal{Lip}_0^\alpha(X,Y):=\textnormal{Lip}_0(X_{d^\alpha},Y)$. Notice also that if $0<\alpha < \beta \leq 1$ and $d$ is bounded, then $\textnormal{Lip}_0^\beta(X,Y) \subset \textnormal{Lip}_0^\alpha(X,Y)$. The metrics $d^\alpha$ and  $d^\beta$ are not equivalent if and only if, $\inf\lbrace \frac{d^\beta(x,x')}{d^\alpha(x,x')}: x,x'\in X; x\neq x'\rbrace=0$, if and only if $\inf\lbrace d(x,x'): x,x'\in X; x\neq x'\rbrace=0$ (since $\beta >\alpha$). Thus, we get the result of the abstract.

\begin{corollary} \label{Porosity2} Let  $(X,d)$ be a bounded metric space with a base point $0_X$, $(Y,\|\cdot\|)$ be a Banach space and $0<\alpha < \beta \leq 1$. Then, $\textnormal{Lip}_0^\beta(X,Y)$ is a $\sigma$-porous subset of $\textnormal{Lip}_0^\alpha(X,Y)$, if and only if $\inf\lbrace d(x,x'): x,x'\in X; x\neq x'\rbrace=0$.
\end{corollary}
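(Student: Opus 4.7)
The plan is to reduce the statement to Corollary \ref{Porosity1} (itself an application of Theorem \ref{Lip}) applied to the two metrics $d_1 := d^\beta$ and $d_2 := d^\alpha$ on the same set $X$. The first task is a sanity check: since $0<\alpha\le\beta\le 1$, both $d^\alpha$ and $d^\beta$ are genuine metrics (subadditivity of $t\mapsto t^\gamma$ on $\R^+$ for $\gamma\in(0,1]$), so the Lipschitz spaces $\textnormal{Lip}_0^\alpha(X,Y)=\textnormal{Lip}_0(X_{d^\alpha},Y)$ and $\textnormal{Lip}_0^\beta(X,Y)=\textnormal{Lip}_0(X_{d^\beta},Y)$ are well defined. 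Boundedness of $d$, say $d\le M$, combined with $\beta>\alpha$, yields $d^\beta(x,x')=d(x,x')^{\beta-\alpha}\cdot d^\alpha(x,x')\le M^{\beta-\alpha}\,d^\alpha(x,x')$. After rescaling one of the metrics by the positive constant $M^{\beta-\alpha}$ (which only rescales the Lipschitz norm and therefore preserves $\sigma$-porosity), we are in the setting $d_1\le d_2$ of Corollary \ref{Porosity1}.

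Next I would identify $\textnormal{Lip}_0^\beta(X,Y)$ with $\mathcal{N}_{d^\beta}\bigl(\textnormal{Lip}_0^\alpha(X,Y)\bigr)$: by the inequality above, any $\beta$-Hölder map is automatically $\alpha$-Hölder, so the Hölder seminorm $\sup\|f(x)-f(x')\|/d^\beta(x,x')$ being finite over $f\in\textnormal{Lip}_0^\alpha(X,Y)$ is exactly membership in $\textnormal{Lip}_0^\beta(X,Y)$. Corollary \ref{Porosity1} then tells us that $\textnormal{Lip}_0^\beta(X,Y)$ is $\sigma$-porous in $\textnormal{Lip}_0^\alpha(X,Y)$ if and only if $d^\alpha$ and $d^\beta$ fail to be equivalent metrics.

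The last step is to translate that non-equivalence into the stated condition on $d$. Since $d^\beta\le M^{\beta-\alpha}d^\alpha$ one direction of equivalence is automatic; the two metrics are therefore equivalent iff the ratio $d^\beta/d^\alpha=d^{\beta-\alpha}$ is bounded below on $\{x\neq x'\}$. Because $\beta-\alpha>0$, this lower bound exists iff $\inf\{d(x,x'):x\neq x'\}>0$. Hence non-equivalence is equivalent to $\inf\{d(x,x'):x\neq x'\}=0$, which is the claim.

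There is no real obstacle here beyond bookkeeping; the only subtle point is that Corollary \ref{Porosity1} is stated under $d_1\le d_2$ (without a constant), so one must either rescale the metric by $M^{\beta-\alpha}$ up front or, equivalently, apply Theorem \ref{Lip} directly to $F=\textnormal{Lip}_0^\alpha(X,Y)$ and $\phi=d^\beta$, where the porosity condition reads $\inf d^\beta/d^\alpha=0$, i.e. the same condition $\inf d=0$. Either route gives the corollary.
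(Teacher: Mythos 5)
Your proof is correct and follows essentially the same route as the paper: reduce to Corollary \ref{Porosity1} with $d_1=d^\beta$, $d_2=d^\alpha$ (or, equivalently, apply Theorem \ref{Lip} directly with $F=\textnormal{Lip}_0^\alpha(X,Y)$ and $\phi=d^\beta$), then translate non-equivalence of the two metrics into $\inf\{d(x,x'):x\neq x'\}=0$ using $\beta>\alpha$ and boundedness of $d$. Your explicit rescaling by $M^{\beta-\alpha}$ to meet the hypothesis $d_1\leq d_2$ is a detail the paper leaves implicit, and it is handled correctly since rescaling $d^\beta$ changes neither the subset $\textnormal{Lip}_0^\beta(X,Y)$ nor the ambient norm.
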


Similarly to the case of lipschitz spaces, we obtain the following results in the linear case.
\begin{corollary} \label{Porosity}  Let  $X_1:=(X,\|\cdot\|_1)$ and $X_2:=(X,\|\cdot\|_2)$ be a linear space equipped with two norms such that $\|\cdot\|_1\leq \|\cdot\|_2$ and let $(Y,\|\cdot\|)$ be a Banach space. Then,  $L(X_1,Y)$ is a $\sigma$-porous  subset of $L(X_2,Y)$ if and only if  $\|\cdot\|_1$ and $ \|\cdot\|_2$ are not equivalent if and only if $L(X_1,Y)\neq L(X_2,Y)$.
\end{corollary}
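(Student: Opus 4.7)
The plan is to apply Theorem \ref{Lip} directly, with the ambient convex cone $F := L(X_2,Y)$ sitting inside $\textnormal{Lip}_0((X_2)_{d_2},Y)$ (where $d_2(x,x') := \|x-x'\|_2$), and with the auxiliary function $\phi(x,x') := \|x-x'\|_1$. Since $X$ is a normed space, Example \ref{exemple1}(iii) guarantees that $F$ satisfies property $(\mathcal{P})$; and $F$ is clearly a closed linear subspace of $\textnormal{Lip}_0((X_2)_{d_2},Y)$, hence a closed convex cone. The hypothesis $\|\cdot\|_1 \leq \|\cdot\|_2$ gives $\phi \leq d_2$ and the inclusion $L(X_1,Y) \subset L(X_2,Y)$.

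The key observation is that for any linear map $T$ one has $T(x)-T(x') = T(x-x')$, so
$$\sup_{x \neq x'} \frac{\|T(x)-T(x')\|}{\|x-x'\|_1} = \sup_{y \neq 0} \frac{\|T(y)\|}{\|y\|_1}.$$
This identifies $\mathcal{N}_{\phi}(F)$ exactly with $L(X_1,Y)$. By the same homogeneity,
$$\inf_{x \neq x'} \frac{\phi(x,x')}{d_2(x,x')} = \inf_{y \neq 0} \frac{\|y\|_1}{\|y\|_2},$$
which is strictly positive if and only if the two norms are equivalent.

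Reading off the equivalence $(1) \Longleftrightarrow (2) \Longleftrightarrow (3)$ of Theorem \ref{Lip} under these identifications then yields the three required equivalences of the corollary. There is essentially no obstacle here: the argument runs strictly parallel to that of Corollary \ref{Porosity1}, with the linear structure simplifying the relevant Lipschitz seminorm to an operator norm via the displayed identity above.
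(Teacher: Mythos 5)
Your proposal is correct and follows exactly the paper's own route: the paper likewise invokes Theorem \ref{Lip} with $F=L(X_2,Y)$ (which satisfies $(\mathcal{P})$ by Example \ref{exemple1}(iii)) and $\phi(x,x')=\|x-x'\|_1$, after observing the identification $L(X_1,Y)=\mathcal{N}_{\|\cdot\|_1}(L(X_2,Y))$. The homogeneity computations you spell out are exactly the (omitted) details behind that identification.
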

\begin{proof}  We use Theorem \ref{Lip} and part $(iii)$ of Exemple \ref{exemple1} after observing that  $L(X_1,Y)=\mathcal{N}_{\|\cdot\|_1}(L(X_2,Y)).$
\end{proof}
\begin{Exemp} Let $i: (l^1(\N), \|\cdot\|_1)\to (l^1(\N), \|\cdot\|_{\infty})$ be the continuous  identity map. Then the image of the adjoint $i^*$ of $i$ is a $\sigma$-porous  subset of $(l^{\infty}(\N), \|\cdot\|_{\infty})$.
\end{Exemp}
We give in the following corollary a connexion between the surjectivity of the adjoint $T^*$ of a one-to-one bounded linear operator $T$ and the non-$\sigma$-porosity of its image (see in this sprit, the open mapping theorem in \cite[Theorem 2.11]{R}).
\begin{proposition} \label{metric}  Let $(X,\|\cdot\|_X)$ and $(Z,\|\cdot\|_Z)$ be Banach spaces.  Let $T :  X\to Z$ be a one-to-one  bounded linear operator and $T^*$ its adjoint. Then, the following assertions are equivalent.

$(i)$ $T^*(Z^*)$ is not  a $\sigma$-porous subset of $X^*$.

$(ii)$ There exists $\alpha>0$ such that $\alpha \|x\|_X\leq \|T(x)\|_Z$ for all $x\in X$.

$(iii)$ $T^*$ is onto.
\end{proposition}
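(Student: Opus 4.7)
The plan is to prove the classical equivalence (ii) iff (iii) directly from the open mapping theorem and Hahn-Banach, and then to deduce (i) iff (ii) by applying Corollary \ref{Porosity} to two norms on the underlying vector space $X$.

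For (ii) iff (iii), I would proceed as follows. Assume (ii); then $T$ is an isomorphism onto its closed range $T(X)$. Given $x^* \in X^*$, the functional $T(x) \mapsto x^*(x)$ is well-defined on $T(X)$ (by injectivity of $T$) and $\|\cdot\|_Z$-continuous (by (ii)). A Hahn-Banach extension yields $z^* \in Z^*$ with $T^*z^* = x^*$, proving (iii). Conversely, if (iii) holds, the open mapping theorem applied to the bounded surjection $T^* : Z^* \to X^*$ produces $c > 0$ such that every $x^* \in X^*$ with $\|x^*\| \leq 1$ lifts to some $z^* \in Z^*$ with $\|z^*\| \leq 1/c$. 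For fixed $x \in X$, choose by Hahn-Banach a norming functional $x^* \in X^*$ with $\|x^*\| = 1$ and $x^*(x) = \|x\|_X$, lift it to $z^*$, and compute $\|x\|_X = z^*(T(x)) \leq (1/c)\|T(x)\|_Z$, which is (ii).

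The crux of the proposition is (i) iff (ii), where Corollary \ref{Porosity} does the work. Define on the vector space $X$ a second norm
\[
\|x\|_T := \|T(x)\|_Z,
\]
which is a norm precisely because $T$ is injective, and which satisfies $\|\cdot\|_T \leq \|T\|\,\|\cdot\|_X$. After rescaling, write $\|\cdot\|_1 := \|\cdot\|_T/\|T\|$ and $\|\cdot\|_2 := \|\cdot\|_X$, so that $\|\cdot\|_1 \leq \|\cdot\|_2$ as required by Corollary \ref{Porosity}. The identification to verify is
\[
T^*(Z^*) = L\bigl((X,\|\cdot\|_T),\R\bigr) = L(X_1,\R),
\]
viewed as subsets of $X^*$. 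One inclusion is trivial since each $z^* \circ T$ is automatically $\|\cdot\|_T$-continuous. For the other, any $x^* \in X^*$ satisfying $|x^*(x)| \leq C\|T(x)\|_Z$ descends to a $\|\cdot\|_Z$-continuous linear functional on the subspace $T(X) \subset Z$, and a Hahn-Banach extension supplies $z^* \in Z^*$ with $z^* \circ T = x^*$.

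With this identification in hand, Corollary \ref{Porosity} (with $Y = \R$, property $(\mathcal{P})$ being supplied by Example \ref{exemple1}(iii)) tells us that $T^*(Z^*)$ is $\sigma$-porous in $X^*$ if and only if $\|\cdot\|_1$ and $\|\cdot\|_2$ are not equivalent. Since $\|\cdot\|_1 \leq \|\cdot\|_2$ holds for free, non-equivalence is exactly the failure of a lower bound $\alpha\|x\|_X \leq \|x\|_1$, which, unwound through the rescaling, is precisely the failure of (ii). Hence (i) and (ii) are equivalent. The only real obstacle is the identification displayed above, which is a routine Hahn-Banach argument; once it is in place, the $\sigma$-porosity content of Corollary \ref{Porosity} completes the picture.
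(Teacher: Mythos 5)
Your proof is correct, and the porosity content is the same as the paper's: both arguments hinge on introducing the auxiliary norm $\|x\|:=\|T(x)\|_Z$ (rescaled so as to be dominated by $\|\cdot\|_X$) and invoking Corollary \ref{Porosity} with $Y=\R$. The difference is architectural. The paper proves the cycle $(i)\Rightarrow(ii)\Rightarrow(iii)\Rightarrow(i)$: for $(i)\Rightarrow(ii)$ it only needs the cheap inclusion $T^*(Z^*)\subset X_1^*$ (a subset of a $\sigma$-porous set is $\sigma$-porous), it proves $(ii)\Rightarrow(iii)$ by the same Hahn--Banach extension you use, and $(iii)\Rightarrow(i)$ is immediate since $X^*$ is complete and hence not $\sigma$-porous in itself. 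You instead establish the two bi-implications $(ii)\Leftrightarrow(iii)$ and $(i)\Leftrightarrow(ii)$, which costs two extra pieces of work: the exact identification $T^*(Z^*)=X_1^*$ (whose nontrivial inclusion is a Hahn--Banach extension from the, possibly non-closed, subspace $T(X)$ --- correctly handled, since Hahn--Banach needs no closedness) and an application of the open mapping theorem to the surjection $T^*$ for $(iii)\Rightarrow(ii)$. Both steps are sound; the open-mapping argument is the one ingredient genuinely absent from the paper, and it buys you a self-contained proof of the classical equivalence $(ii)\Leftrightarrow(iii)$ that bypasses porosity entirely, whereas the paper's cyclic arrangement lets the trivial implication $(iii)\Rightarrow(i)$ do that job for free. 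One point worth making explicit in your write-up: the direction of Corollary \ref{Porosity} you use for $(ii)\Rightarrow(i)$ ultimately rests on the Baire category theorem, namely that the full dual $X^*$ cannot be $\sigma$-porous in itself.
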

\begin{proof} Since $T :  X\to Z$ is a one-to-one  bounded linear operator, then, the following map define another norm on $X$:
\begin{eqnarray*}
\|x\|:=\frac{\|T(x)\|_Z}{\|T\|}\leq \|x\|_X, \hspace{3mm} \forall x\in X.
\end{eqnarray*}
Let us denote $X_1:=(X,\|\cdot\|)$. By Corollary \ref{Porosity}, applied with $Y=\R$, we have that $X_1^*$ is a $\sigma$-porous subset of $X^*$  if and only if $\|\cdot\|$ and $\|\cdot\|_X$ are not equivalente. Thus, if $(ii)$ is not satisfied  (that is, $\|\cdot\|$ and $\|\cdot\|_X$ are not equivalente) then, since $T^*(Z^*)\subset X_1^*$ we get that  $T^*(Z^*)$ is contained in a $\sigma$-porous subset of $X^*$. Hence, $(i) \Longrightarrow (ii)$ is proved. Now, suppose that $(ii)$ holds, it follows that $T(X)$ is closed in $Y$. Let $x^*\in X^*$ and define $\phi$ on $T(X)$ by $\phi(T(x)):=x^*(x)$ for all $x\in X$. Clearly $\phi$  is well defined (since $T$ is one-to-one) and linear continuous on $T(X)$. Thus, $\phi$ extends to a linear continuous functional $y^*\in Y^*$ and we have $T^*(y^*)=y^*\circ T=x^*$. Hence, $T^*$ is onto and $(ii) \Longrightarrow (iii)$ is proved. Part $(iii) \Longrightarrow  (i)$, is trivial.
\end{proof}

Let $(X,\|\cdot\|)$ be a normed space, and let $S$ be a nonempty subset of the dual space $X^*$. The set $S$ is called separating if: $x^*(x) = 0$ for all $x^* \in S$ implies that $x = 0$. It is called norming if the functional 
$$N_S(x) =\sup_{x^* \in S; x^*\neq 0} \frac{|x^*(x)|}{\|x^*\|},$$ is an equivalent norm on $X$ (see \cite{GK} for the use of this notion). 

\begin{proposition} Let $(X,\|\cdot\|)$ be a normed space. Every  separating subset $S\subset X^*$  which is not  a $\sigma$-porous subset of $X^*$, is norming. 
\end{proposition}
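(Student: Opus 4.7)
The plan is to reframe the statement as a question about equivalence of two norms on $X$ and then invoke Corollary \ref{Porosity}. I would first consider the seminorm $N_S$ from the statement. Because $|x^*(x)|\leq \|x^*\|\cdot\|x\|$, one has $N_S(x)\leq \|x\|$; and because $S$ is separating, $N_S(x)=0$ forces $x^*(x)=0$ for every $x^*\in S$, hence $x=0$. So $N_S$ is in fact a norm on $X$ dominated by $\|\cdot\|$. Write $X_1:=(X,N_S)$, which fits the framework of Corollary \ref{Porosity} (with $Y=\R$) via the inequality $N_S\leq \|\cdot\|$.

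Next I would observe that $S\subset X_1^*$. Indeed, for every $x^*\in S$ with $x^*\neq 0$, the inequality $|x^*(x)|\leq \|x^*\|N_S(x)$ is built into the very definition of $N_S$, so $x^*$ is continuous on $X_1$ with $X_1^*$-norm at most $\|x^*\|$. Corollary \ref{Porosity} then tells us that $X_1^*$ is a $\sigma$-porous subset of $X^*$ if and only if $N_S$ and $\|\cdot\|$ are \emph{not} equivalent, i.e.\ if and only if $S$ fails to be norming.

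The remaining ingredient is the hereditary property of (sigma-)porosity: if $B$ is porous in $X^*$ with constant $c$, then every $A\subset B$ is porous with the same constant $c$, because for each $x\in A\subset B$ the same approximating sequence $(y_n)$ that witnesses porosity of $B$ at $x$ also witnesses porosity of $A$ at $x$ (since $B(y_n,cd(y_n,x))\cap A\subset B(y_n,cd(y_n,x))\cap B=\emptyset$). Hence subsets of $\sigma$-porous sets are $\sigma$-porous. Now I would conclude by contrapositive: if $S$ is not norming, then by the above $X_1^*$ is $\sigma$-porous in $X^*$, and therefore so is its subset $S$, contradicting the hypothesis. I do not expect any real obstacle; all the analytic content is already packaged in Corollary \ref{Porosity}, and the subset-heredity of porosity is a one-line check from Definition \ref{prous}.
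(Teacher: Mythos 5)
Your proof is correct and follows essentially the same route as the paper's: both arguments place $S$ inside $\mathcal{N}_{N_S}(X^*)=X_1^*$ and then apply the porosity criterion (you via Corollary \ref{Porosity}, the paper via Theorem \ref{Lip} directly, which amounts to the same thing). You are in fact slightly more careful than the printed proof, since you explicitly check that the separating hypothesis makes $N_S$ a genuine norm (needed to apply the corollary) and that $\sigma$-porosity is inherited by subsets --- two points the paper leaves implicit.
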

\begin{proof} It is clear that $N(x)\leq \|x\|$ for all $x\in X$. On the other hand, we have that
$$S \subset \mathcal{N}_{N_S}(X):=\lbrace x^*\in X^* : \sup_{N_S(x)=1} |x^*(x)| <+\infty \rbrace.$$
Since $S$  is not contained in a $\sigma$-porous subset of $X^*$, then  $\mathcal{N}_{\phi}(X)$ must be non-$\sigma$-porous, which implies  from Theorem \ref{Lip} that $\inf_{\|x\|=1}N_S(x)>0$. Hence $N_S$ is equivalent to $\|\cdot\|$.
\end{proof}
\subsection{Coarse Lipschitz function and Lipschitz-free space}
Given a metric space $(X, d)$ with a base point $0_X$, the free space  $\mathcal{F}(X)$ is constructed as follows: we first consider as pivot space the Banach space $(\textnormal{Lip}_0(X),\|\cdot\|_L)$ of real-valued Lispchitz functions vanishing at the base point.
Then each $x \in X$ is identified to a Dirac measure $\delta_x$ acting linearly on $\textnormal{Lip}_0(X)$ as evaluation. Then the
mapping
\begin{eqnarray*}
\delta_X : X &\to& \textnormal{Lip}_0(X)^*\\
             x &\mapsto& \delta_x
\end{eqnarray*}
that maps $x$ to $\delta_x$ is an isometric embedding. The Lipschitz-free space $\mathcal{F}(X)$ over $X$ is defined as the
closed linear span of $\delta(X)$ in $\textnormal{Lip}_0(X)$.
%, with the norm  $$\|Q\|_{\textnormal{Lip}_0(X_{d})^*}:=\sup_{f\in \textnormal{Lip}_0(X); \|f\|_L=1}\langle Q, f\rangle, \hspace{3mm} \forall Q\in \mathcal{F}(X).$$
  Furthermore, the free space is a predual for $\textnormal{Lip}_0(X)$, meaning
that $\mathcal{F}(X)^*$ is isometrically isomorphic to $\textnormal{Lip}_0(X)$.  Let $(X,d)$ and $(Y,d')$ be two metric spaces, each one with a base point ($0_X$ and $0_Y$ , respectively) and $F : X \to Y$ a Lipschitz function such that $R(0_X) = 0_Y$ . Then, it is well known (see \cite[Lemma ~2.2]{GK}) that there exists a unique linear operator $\widehat{F} : \mathcal{F}(X) \to \mathcal{F}(Y)$ such that $\|F\|_L= \|\widehat{F}\|$ and $\delta_Y \circ F=\widehat{F}\circ \delta_X$. The adjoint of $\widehat{F}$, namely $\widehat{F}^*:  \textnormal{Lip}_0(Y)\to  \textnormal{Lip}_0(X)$, satisfies $\widehat{F}^*(f)=f\circ F$ for all $f\in \textnormal{Lip}_0(Y)$.
% For a survey on the properties and development of Lipschitz-free spaces, we refer the reader to \cite{Go}. 

 A map $F: (X,d) \to (Y,d')$  is said to be a coarse Lipschitz, if there exist $\alpha, \beta>0$ such that 
$$\alpha d(x,x') \leq d'(F(x),F(x'))\leq \beta d(x,x'), \hspace{3mm} \forall x, x' \in X.$$
Combining Proposition \ref{metric} together with a similar proof, we obtain in the following proposition, a characterization of coarse Lipschitz maps.
\begin{proposition} Let $(X,d)$ and $(Y,d')$ be metric spaces with base points $0_X$ and $0_Y$ respectively and let $F: (X,d) \to (Y,d')$  be a one-to-one  Lipschitz map such that $F(0_X)=0_Y$. Then the following assertions are equivalent.

$(i)$ The image of $\widehat{F}^*$  is not  $\sigma$-porous in $\textnormal{Lip}_0(X)$.
%$\lbrace f\circ F : f\in \textnormal{Lip}_0(Y)\rbrace$ is not a $\sigma$-porous in $(\textnormal{Lip}_0(X),\|\cdot\|_L)$.

$(ii)$ The map $F$ is coarse Lipschitz.

$(iii)$ The adjoint  $\widehat{F}^*$ is onto.
%We have that $\lbrace f\circ F : f\in \textnormal{Lip}_0(Y)\rbrace=\textnormal{Lip}_0(X)$.

$(iv)$ The linear map $\widehat{F} : \mathcal{F}(X) \to \mathcal{F}(Y)$ is coarse Lipschitz.
\end{proposition}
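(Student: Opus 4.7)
The plan is to mirror the structure of Proposition \ref{metric}, but with $T$ replaced by $\widehat{F}$ and with the linear-norm setting replaced by the Lipschitz-metric setting. The key auxiliary object is the pull-back pseudo-metric $d_F$ on $X$ defined by $d_F(x,x'):=d'(F(x),F(x'))$. Since $F$ is one-to-one it is actually a metric; since $F$ is Lipschitz we have $d_F\leq \|F\|_L\,d$; and $F$ is coarse Lipschitz precisely when $d_F$ is equivalent to $d$ (the upper estimate is automatic, only the lower estimate is the content of $(ii)$). The image of $\widehat{F}^*$ consists exactly of the compositions $f\circ F$ with $f\in\textnormal{Lip}_0(Y)$, and any such composition satisfies $\|f\circ F\|_{L,d_F}\leq\|f\|_L$, so $\widehat{F}^*(\textnormal{Lip}_0(Y))\subset \mathcal{N}_{d_F}(\textnormal{Lip}_0(X))$.

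For $(i)\Rightarrow (ii)$ I would argue by contraposition: if $F$ is not coarse Lipschitz, then $\inf\{d_F(x,x')/d(x,x'):x\neq x'\}=0$. Applying Theorem \ref{Lip} with the cone $\textnormal{Lip}_0(X)$ (which has property $(\mathcal{P})$ by Example \ref{exemple1}(iii)) and the weight $\phi=d_F$ shows that $\mathcal{N}_{d_F}(\textnormal{Lip}_0(X))$ is $\sigma$-porous in $\textnormal{Lip}_0(X)$; since subsets of $\sigma$-porous sets are $\sigma$-porous, the inclusion above gives the conclusion. The implication $(iii)\Rightarrow (i)$ is immediate, because if $\widehat{F}^*$ is onto then its image is all of $\textnormal{Lip}_0(X)$, a Banach space, hence of second Baire category in itself and in particular not $\sigma$-porous in itself.

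For $(ii)\Rightarrow (iii)$ I would invoke McShane's extension theorem. Given $g\in\textnormal{Lip}_0(X)$, define $\tilde g$ on $F(X)$ by $\tilde g(F(x)):=g(x)$; this is well defined because $F$ is one-to-one, vanishes at $0_Y=F(0_X)$, and is Lipschitz with constant $\|g\|_L/\alpha$ (where $\alpha>0$ comes from coarse Lipschitzness). Extending $\tilde g$ by McShane to a real-valued Lipschitz function $f$ on $Y$ (with the same constant) gives $f\in\textnormal{Lip}_0(Y)$ with $\widehat{F}^*(f)=f\circ F=g$; note that the extension automatically vanishes at $0_Y$ since $0_Y\in F(X)$ and $\tilde g(0_Y)=0$.

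Finally, the equivalence $(iii)\Leftrightarrow(iv)$ is pure Banach-space duality: for the bounded linear operator $\widehat{F}:\mathcal{F}(X)\to\mathcal{F}(Y)$, surjectivity of its adjoint $\widehat{F}^*$ is equivalent to $\widehat{F}$ being bounded below, which is the same statement as $\widehat{F}$ being coarse Lipschitz as a linear map. I expect no serious obstacle; the only subtle points are the well-definedness of $d_F$ (needing injectivity of $F$), checking that the McShane extension indeed vanishes at the base point, and remembering that $\sigma$-porosity is hereditary under taking subsets.
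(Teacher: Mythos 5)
Your proposal is correct and follows essentially the same route as the paper: the pull-back metric $d_F$ (the paper's $d_1$, up to normalization by the Lipschitz constant), Theorem \ref{Lip} applied to the weight $\phi=d_F$ for $(i)\Rightarrow(ii)$, the McShane/inf-convolution extension for $(ii)\Rightarrow(iii)$, and linear duality (Proposition \ref{metric}) for $(iii)\Leftrightarrow(iv)$. The only cosmetic difference is that you invoke Theorem \ref{Lip} directly where the paper passes through Corollary \ref{Porosity1}, which is itself just that theorem specialized to two metrics.
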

\begin{proof}  Since, $F$ is one-to-one, we define the following metric on $X$
$$d_1(x,x'):=\frac{1}{L_F}d'(F(x),F(x'))\leq d(x,x'), \hspace{3mm} \forall x, x' \in X,$$
where $L_F$ denotes the constant of Lipschitz of $F$. Suppose that $F$ is not coarse Lipschitz, then the metric $d_1$ is not equivalent to the metric $d$. It follows, using Corollary \ref{Porosity1},  that $\textnormal{Lip}_0(X_1)$ is $\sigma$-porous subset of $\textnormal{Lip}_0(X)$, where $X_1=(X,d_1)$. Now, we observe that $\textnormal{Im}(\widehat{F}^*):=\lbrace f\circ F : f\in \textnormal{Lip}_0(Y)\rbrace\subset \textnormal{Lip}_0(X_1)$, which implies that $\textnormal{Im}(\widehat{F}^*)$ is a $\sigma$-porous subset of $(\textnormal{Lip}_0(X),\|\cdot\|_L)$. Thus, we proved that $(i) \Longrightarrow (ii)$. Let us prove that $(ii) \Longrightarrow (iii)$. Let $g\in \textnormal{Lip}_0(X)$, we need to show that there exists $f\in \textnormal{Lip}_0(Y)$ such that $g=f\circ F$. Indeed, define $\phi: F(X)\to \R$ by $\phi(F(x)):=g(x)$ for all $x\in X$. The map $\phi$ is well defined since $F$ is one-to-one. On the other hand, $\phi$ is Lipschitz on $F(X)$ since $F$ is coarse Lipschitz. Thus, $\phi$ extends to a Lipschitz function $f$ from $Y$ into $\R$ with the same constant of Lipschitz, by the inf-convolution formula ($L_\phi$ denotes the constant of Lipschitz of $\phi$ on $F(X)$): $\forall y \in Y$
$$f(y):=\inf \lbrace \phi(y')+L_\phi d(y,y'): y'\in F(X)\rbrace.$$
Hence, $f\in \textnormal{Lip}_0(Y)$ and $f\circ F(x)=\phi(F(x))=g(x)$ for all $x\in X$ and so $(ii) \Longrightarrow (iii)$ is proved. Part $(iii) \Longrightarrow (i)$ is trivial. Now, from Proposition\ref{metric}, we see $(iii) \iff (iv)$.
\end{proof}
\subsection{Application to the barrier cone and polar of sets} Let $X$ be a normed space and $K$ be a nonempty subset of $X$. The  barrier cone of $K$ is the subset $B(K)$ of the topological dual $X^*$ defined by
$$B(K)=\lbrace x^* \in X^*: \sup_{x\in K} x^*(x)<+\infty \rbrace.$$
The polar set of $K$ is a subset of the barrier cone of $K$ defined as follows:
$$K^{\circ}=\lbrace x^* \in X^*: \sup_{x\in K} x^*(x) \leq 1 \rbrace.$$
\vskip5mm
The study of barrier cones has interested several authors. It is shown in ~\cite[Theorem 3.1.1]{La} that for a closed convex subset $K$ of $X$, we have that $B(K)=X^*$ if and only if $K$ is bounded, on the other hand, $B(K)$ is dense in $X^*$ if and only if $K$ does not contain any halfline. In general, we know that $\overline{B(K)}\neq X^*$ (see example in  \cite{AET}). A study of the closure of the barrier of a closed convex set is given in \cite{AET}. As an immediat consequence of  main theorem, we obtain bellow that the barrier cone of some general class of  unbounded subsets  is a $\sigma$-pourous subset of the dual $X^*$. This shows that in general, the barrier cone may be a "very small" subset of $X^*$.
\vskip5mm
We define the classe $\Phi(X)$ of positive functions on $X$ (not necessarily continuous)  as follows: $\phi \in \Phi(X)$ if and only if,  $\phi : X\to \R^+$ and satisfies 

$(i)$ $\phi(\lambda x)=|\lambda|\phi(x)$ for all $x\in X$ and all $\lambda \in \R$

$(ii)$ $\phi(x)=0$ if and only if $x=0$
\vskip5mm
For every  $\phi \in \Phi(X)$, we denote  $S_\phi:=\lbrace x\in X: \phi(x)= 1 \rbrace $ and $C_\phi:=\lbrace x\in X: \phi(x)\leq 1 \rbrace $. Notice, that in general $C_\phi$ is not a convex set (resp. not closed), if we do not suppose that $\phi$ is a convex function (resp. a continuous function). It is easy to see that 
\begin{eqnarray} \label{E1}
C_\phi \textnormal{ is bounded }\iff  S_\phi \textnormal{ is bounded } \iff  \inf_{x\in X: \|x\|=1}\phi(x) >0.
\end{eqnarray}
 Thanks to the symmetry of $\phi\in \Phi(X)$ and the fact that $B(C_\phi)=B(S_\phi)$, we have using the notation of Theorem \ref{Lip}, that
\begin{eqnarray} \label{E2}
B(C_\phi)=\mathcal{N}_{\phi}(X^*).
\end{eqnarray}
The polar of  $S_\phi$ coincides with $\mathcal{N}_{\phi,1}(X^*)$ and we have 
\begin{eqnarray} \label{E3}
C_\phi^{\circ}\subset S_\phi^{\circ} =\mathcal{N}_{\phi,1}(X^*).
\end{eqnarray}

Now, using (\ref{E1}), (\ref{E2}) and (\ref{E3}) and applying Theorem \ref{Lip} to the spaces $F=X^*$ and $Y=\R$ (using Exemple \ref{exemple1}), we get directly the following informations about the size of the barrier cone, as well as the polar of  a set of the form $C_\phi$ in the dual space.
\begin{corollary} Let $X$ be a normed space and $\phi \in \Phi(X)$. If  $C_\phi$ is not bounded in $X$, then the polar $C_\phi^{\circ}$  is contained in a porous subset of  $X^*$. Moreover, the followin assertions are equivalent.

$(i)$ $B(C_\phi)\neq X^*$.

$(ii)$ $C_\phi$ is not bounded in $X$.

$(iii)$ $B(C_\phi)$ is a $\sigma$-porous subset of $X^*$.
\end{corollary}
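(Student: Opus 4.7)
The plan is to deduce the corollary as a direct application of Theorem \ref{Lip} to the cone $F=X^*\subset \textnormal{Lip}_0(X,\R)$ (with $Y=\R$), using the translations already summarized in equations (\ref{E1}), (\ref{E2}), (\ref{E3}). Given $\phi\in\Phi(X)$, I would first promote it to a two-variable positive function on $X\times X$ by setting $\widetilde{\phi}(x,x'):=\phi(x-x')$; the symmetry property $\phi(\lambda x)=|\lambda|\phi(x)$ together with $\phi(x)=0\iff x=0$ guarantees that $\widetilde{\phi}(x,x')=0$ if and only if $x=x'$, which is the admissibility hypothesis required to invoke Theorem \ref{Lip}.

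Next I would verify the two hypotheses of Theorem \ref{Lip}. The property $(\mathcal{P})$ for $F=X^*$ is exactly part $(iii)$ of Example \ref{exemple1} (applied with $Y=\R$), so it is free. The crucial translation is the identity
\begin{equation*}
\inf_{x\neq x'}\frac{\widetilde{\phi}(x,x')}{\|x-x'\|}
=\inf_{y\neq 0}\frac{\phi(y)}{\|y\|}
=\inf_{\|y\|=1}\phi(y),
\end{equation*}
obtained by setting $y=x-x'$ and using positive homogeneity. Combined with (\ref{E1}), this yields the equivalence
\begin{equation*}
C_\phi \textnormal{ is bounded}\iff \inf_{x\neq x'}\frac{\widetilde{\phi}(x,x')}{\|x-x'\|}>0.
\end{equation*}

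With these identifications at hand, the three equivalences $(i)\iff(ii)\iff(iii)$ follow immediately: by (\ref{E2}), $B(C_\phi)=\mathcal{N}_{\widetilde{\phi}}(X^*)$, so Theorem \ref{Lip} asserts that $B(C_\phi)\neq X^*$ is equivalent to $\inf\widetilde{\phi}/d=0$ and to $\sigma$-porosity of $B(C_\phi)$ in $X^*$; under the translation above, these are respectively $(i)$, $(ii)$, and $(iii)$. Finally, the additional statement about the polar set is obtained from (\ref{E3}): when $C_\phi$ is unbounded, Theorem \ref{Lip} (the part preceding the three-way equivalence) asserts that $\mathcal{N}_{\widetilde{\phi},1}(X^*)$ is porous in $X^*$, hence $C_\phi^\circ\subset S_\phi^\circ=\mathcal{N}_{\widetilde{\phi},1}(X^*)$ is contained in a porous subset.

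There is really no substantial obstacle: the corollary is essentially a dictionary translation, and the only bookkeeping step is to verify that homogeneity of $\phi$ correctly reduces the two-variable infimum over $X\times X$ to the one-variable normalized infimum used in the definition (\ref{E1}) of boundedness of $C_\phi$. The nontrivial content sits inside Theorem \ref{Lip}; once that is invoked, the proof is a one-line chain of the three stated equalities/inclusions.
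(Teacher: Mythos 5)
Your proposal is correct and follows essentially the same route as the paper: the paper's own proof is precisely the one-line invocation of (\ref{E1}), (\ref{E2}), (\ref{E3}) together with Theorem \ref{Lip} applied to $F=X^*$, $Y=\R$. You merely make explicit the (harmless) abuse of notation by which the one-variable $\phi$ is turned into the two-variable $\widetilde{\phi}(x,x')=\phi(x-x')$ and the homogeneity computation reducing $\inf_{x\neq x'}\widetilde{\phi}(x,x')/\|x-x'\|$ to $\inf_{\|y\|=1}\phi(y)$, which the paper leaves implicit.
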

We deduce that, if  $K$ is any nonempty subset of $X$  such that  $S_\phi \subset K$, for some $\phi \in \Phi(X)$ with $S_\phi$ not bounded, then the polar $K^{\circ}$ is contained in a porous subset of $X^*$ and the barrier cone $B(K)$ is contained in a $\sigma$-porous subset of $X^*$. Notice that if  $K$ is a closed absorbing disk in $X$, does not contain a non-trivial vector subspace and is a neighborhood of the origin in $X$, then the Minkowski functional $\phi_K$ of $K$ is a continuous  norm (with respect to the norm $\|\cdot\|$, but not equivalent to it, if $K$ is not bounded) hence $\phi_K \in \Phi(X)$ and we have that $K=C_{\phi_K}$.

%%%%%%%%%%%%%
%%%%%%%%%%%%%

\bibliographystyle{amsplain}

\end{document}